\documentclass[10.5pt]{amsart}
\usepackage{a4,amssymb,amsfonts,amsmath,enumerate,color,url,hyperref,mathbbol}
\def\:{\thinspace:\thinspace}
\numberwithin{equation}{section}
\newtheorem{theo}{Theorem}

\newtheorem{lemma}[theo]{Lemma}
\newtheorem{prop}[theo]{Proposition}
\newtheorem{cor}[theo]{Corollary}
\newtheorem{defi}[theo]{Definition}

\theoremstyle{definition}

\newtheorem{rem}[theo]{Remark}

\DeclareMathOperator{\Id}{Id}

\numberwithin{theo}{section}

 \author{Delio Mugnolo}
 
 \address{Delio Mugnolo, Institut f\"ur Analysis, Universit\"at Ulm, 89069 Ulm, Germany}
\email{delio.mugnolo@uni-ulm.de}

\author{Serge Nicaise}

\address{Serge Nicaise, Universit\'e de Valenciennes et du Hainaut Cambr\'esis, LAMAV, FR CNRS 2956, ISTV, F-59313 - Valenciennes Cedex 9, France}
\email{Serge.Nicaise@univ-valenciennes.fr}

\thanks{This article has been initiated during a visit of the first author in Valenciennes and continued during a visit of the second author in Ulm. The authors thank  the Universit\'e de Valenciennes et du Hainaut Cambr\'esis and the Universit\"at Ulm for their hospitality. Both visits were  financially supported by the Land Baden--W\"urttemberg in the framework of the \emph{Juniorprofessorenprogramm} -- research project on ``Symmetry methods in quantum graphs''. We thank Fritz Gesztesy and Konstantin Pankrashkin for interesting discussions on the topic of Section~\ref{sec:krein}.}

\title[Heat equation under moment conditions]{The heat equation under conditions\\ on the moments in higher dimensions}
\subjclass[2010]{47D06 (primary), 35J20 (secondary)}

\keywords{Non-local conditions; Analytic semigroups; Quadratic forms; Krein-von Neumann extension}

\begin{document}
\maketitle

\begin{abstract}
We consider the heat equation on the $N$-dimensional cube $(0,1)^N$ and impose different classes of integral conditions, instead of usual boundary ones. Well-posedness results for the heat equation under the condition that the moments of order 0 and 1 are conserved had been known so far only in the case of $N=1$ -- for which such conditions can be easily interpreted as conservation of mass and barycenter. In this paper we show that in the case of general $N$ the heat equation with such integral conditions is still well-posed, upon suitably relax the notion of solution. 
Existence and uniqueness of solutions with general initial data in a suitable space of distibutions over $(0,1)^N$ are proved by introducing two appropriate realizations of the Laplacian and checking by form methods that they generate analytic semigroups. The solution thus obtained does however solve the heat equation only in a certain distributional sense.
However, it turns out that one of these realizations is tightly related to a well-known object of operator theory, the Krein--von Neumann extension of the Laplacian. This connection also establishes well-posedness in a classical sense, as long as the initial data are $L^2$-functions.
\end{abstract}

\section{Introduction}

Fifty years ago, J.R.\ Cannon has suggested in~\cite{Can63} that realistic mechanical considerations suggest to study diffusion equations imposing a condition on the moment of order 0 of the unknown (i.e., on the total mass of the system), thus dropping one of the boundary conditions. Cannon's analysis was limited to the one-dimensional case, both in~\cite{Can63} and his later research surveyed in~\cite{Can84}. His original setting has been significantly generalized over the years (cf.~\cite[\S~1]{MugNic13} for a historical overview), in particular replacing also  the remaining boundary condition by a condition on the moment of order 1 (i.e., on the barycenter). 

Still, to the best of our knowledge only a few tentative extensions of the above conditions for heat or wave equations on higher dimensional domains have been proposed in the literature: We mention~\cite{MesBou01,MerBou03,Pul07}, where earlier related references are also collected. A first difficulty is that in case of a bounded domain in dimension $N\ge 2$ (unlike in the 1-dimensional case) infinitely many (boundary) conditions are necessary to determine a solution, while prescribing the moments of order 0 and 1 yields only finitely many conditions.

In this note, we introduce a general setting which in our opinion yields the proper $N$-dimensional extensions of the 1-dimensional moment conditions studied in~\cite{BouBen96,MugNic13}. Because the moments of order 0 and 1 of a given function are its integrals against the two vectors that span the space of one-dimensional harmonic functions, it is natural to conjecture that for $N\ge 2$ Cannon's setting should be extended by imposing orthogonality to the harmonic functions. This allows to define two diffusion-type operators $A$ and $\tilde{A}$ and to study the associated parabolic problem.

We stress that neither of these operators is a classical Laplacian: Rather, each of them is an isomorphic image of (a realization of) the Laplacian in a suitable space of distributions of $H^{-1}$-type.
An analogous problem has been already observed in~\cite{MugNic13}. Roughly speaking, $A$ turns out to be an operator {with conditions on the moment of order 0 along with uncountably many further conditions (that however impose in particular that the $N$ linear moments of order 1 of the unknown have to vanish); whereas} $\tilde{A}$ will be an operator with periodic boundary conditions and a condition on the moment of order 0 only. This is discussed in detail in Section~\ref{sec:homog}.
Nevertheless, $A$ and $\tilde{A}$ agree with the Laplacian in the sense of distributions.

In the special case of $N=1$ it has been shown in~\cite[\S\S~3--4]{MugNic13} that the solution of the abstract Cauchy problems associated with either of them also solves the classical heat equation. Indeed, the orbits of the semigroups generated by $A$ and $\tilde{A}$ contain only elements whose smoothness suffices to
ensure that the generator acts on them as the usual second derivative: This can be proved carefully describing the smoothing enhancement yielded by analyticity of the semigroups.

It seems that a similar strategy is not successful in the present  higher dimensional context. In fact, a direct computation suggests that in general the elements of the orbits are not much better than $L^2$. Nevertheless, we will eventually show in Section~\ref{sec:parab} that the semigroups actually solve the usual heat equation (with moment conditions) -- at least in a suitably weak, distributional sense.

Finally, in Section~\ref{sec:krein} we comment on the well-posedness of the relevant diffusion problem in a more usual $L^2$-setting. We also emphasize the connections between our investigations and Krein's theory of self-adjoint extensions: Also this observation seems to be new.

\section{The functional  setting}\label{sec:gelfand}

Let $\Omega\subset \mathbb R^N$ be an open domain with Lipschitz boundary. It is well-known (see e.g.~\cite[Thm.~2.5]{AreEls12}) that 
\begin{equation}\label{eq:decomposition}
H^1(\Omega)=H^1_0(\Omega)\oplus {\rm Har}(\Omega),
\end{equation}
where ${\rm Har}(\Omega)$ denotes the space of {(weakly) harmonic} functions, i.e.,
\[
{\rm Har}(\Omega):=\left\{u\in H^1(\Omega):\int_\Omega \nabla u\overline{\nabla v}dx=0 \ \forall v\in H^1_0(\Omega) \right\}.
\]
Furthermore, the trace operator $\gamma_0$ is an isomorphism between ${\rm Har}(\Omega)$ (the orthogonal of its null space) and $H^\frac{1}{2}(\partial \Omega)$ (its range).

If $T^N$ denotes the $N$-dimensional torus, we can consider the (Hilbert) space $H^1(T^N)$, defined as usual as the space of periodic functions defined on $T^N$. Throughout this article we will denote
\[
\mu_0(h):=\langle h,1\rangle,\qquad h\in H^{-1}(T^N),
\]
the mean -- i.e., the moment of order 0 -- of a distribution  $h$.

Now, $H^1_0((0,1)^N)$ is a closed subspace of $H^1(T^N)$, hence we can consider its orthogonal complement which, as above, turns out to be
\begin{equation}\label{eq:decomposition-2}
{\rm Har}(T^N):=\left\{u\in H^1(T^N):\int_{T^N} \nabla u\overline{\nabla v}dx=0 \ \forall v\in H^1_0(\Omega) \right\}=:H^1(T^N)\ominus H^1_0\left( (0,1)^N \right).
\end{equation}
Hence, using $L^2\left( (0,1)^N \right)$ as pivot space we can find the decomposition
\begin{equation}\label{eq:decomposition-3}
H^{-1}(T^N)=H^{-1}\left( (0,1)^N \right)\oplus\left(	{\rm Har}(T^N)\right)',
\end{equation}
where $H^{-1}\left( (0,1)^N \right):=(H^1_0(0,1)^N)'$.

\begin{rem}\label{rem:1-dim-comp-2}
In other words, ${\rm Har}(T^N)$ is the space of periodic $H^1$-functions that are {(weakly) harmonic}, whereas ${\rm Har}\left( (0,1)^N \right)$ is the space of all {(weakly) harmonic} $H^1$-function on the unit cube. It is by definition clear that ${\rm Har}(T^N)$ is a subspace of ${\rm Har}\left( (0,1)^N \right)$. In fact, it will in general be a proper subspace: For $N=1$ we e.g.\ find that ${\rm Har}(T^N)$ and ${\rm Har}\left( (0,1)^N \right)$ are isomorphic to the spaces ${\mathbb P}_0$ and  ${\mathbb P}_1$ of polynomials of degree $0$ and of degree less than or equal to $1$, respectively. 
\end{rem}

Let us denote by $\Id$ the orthogonal projector of $H^{-1}(T^N)$ onto its closed subspace $H^{-1}\left( (0,1)^N \right)$ and by $\Id_m$ the restriction of $\Id$ to the annihilator 
$$H:=\{w\in H^{-1}(T^N)\, :\, \langle w, v\rangle =0\ \forall v\in {\rm Har}(T^N)\}$$
of ${\rm Har}(T^N)$, which is an isomorphism from $H$ to $H^{-1}\left( (0,1)^N \right)$.

Denote by
$$
H^{\frac12}(\partial T^N):=\{\gamma_0 v: v\in H^1\left( T^N\right)\},
$$
where $\gamma_0$ is here the trace operator from $H^1\left((0,1)^N\right)$ into $H^{\frac12}(\partial (0,1)^N)$.
The space $H^{\frac12}(\partial T^N)$ is smaller than $H^{\frac12}(\partial (0,1)^N)$ due to the periodicity assumption on elements from $H^1\left( T^N\right)$. Nevertheless $
H^{\frac12}(\partial T^N) 
$ is a Hilbert space with the induced norm
$$
\|\varphi\|_{H^{\frac12}(\partial T^N)}:=\inf_{\substack{v\in H^1\left( T^N\right)\\ \varphi=\gamma_0 v}} \|v\|_{H^1\left( T^N\right)}.
$$
\begin{lemma}\label{lemmafctharmonique}
For all $\varphi\in H^{\frac12}(\partial T^N)$, there exists a unique $R\varphi\in {\rm Har}(T^N)$ such that
\[
\gamma_0 R\varphi =\varphi  \hbox{ on }\partial (0,1)^N.
\]
In other words, ${\rm Har}(T^N)$ is isomorphic to $H^{\frac12}(\partial T^N)$.
\end{lemma}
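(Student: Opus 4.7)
The plan is to build $R\varphi$ directly from the decomposition~\eqref{eq:decomposition-2}. Given $\varphi\in H^{\frac12}(\partial T^N)$, the very definition of this space furnishes some $v\in H^1(T^N)$ with $\gamma_0 v=\varphi$. I would split $v=v_0+w$ according to~\eqref{eq:decomposition-2}, with $v_0\in H^1_0((0,1)^N)$ and $w\in{\rm Har}(T^N)$, and set $R\varphi:=w$. Since every element of $H^1_0((0,1)^N)$ has vanishing trace on $\partial (0,1)^N$, this yields $\gamma_0 R\varphi=\gamma_0 v-\gamma_0 v_0=\varphi$, which settles existence.

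For uniqueness, suppose $w_1,w_2\in{\rm Har}(T^N)$ both satisfy $\gamma_0 w_i=\varphi$. Then $w_1-w_2\in{\rm Har}(T^N)$ has zero trace on $\partial(0,1)^N$ and therefore, viewed on $(0,1)^N$, lies in $H^1_0((0,1)^N)\cap{\rm Har}(T^N)$; this intersection is trivial by the direct-sum property of~\eqref{eq:decomposition-2}, so $w_1=w_2$.

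It remains to upgrade the linear bijection $R:H^{\frac12}(\partial T^N)\to{\rm Har}(T^N)$ to an isomorphism of Hilbert spaces. The inverse $\gamma_0|_{{\rm Har}(T^N)}$ is bounded for free: because $R\varphi\in H^1(T^N)$ is an admissible competitor in the infimum defining $\|\varphi\|_{H^{\frac12}(\partial T^N)}$, we get $\|\gamma_0 w\|_{H^{\frac12}(\partial T^N)}\le \|w\|_{H^1(T^N)}$ for every $w\in{\rm Har}(T^N)$. Since ${\rm Har}(T^N)$ is a closed subspace of $H^1(T^N)$ and $H^{\frac12}(\partial T^N)$ is the quotient of $H^1(T^N)$ by the closed subspace $H^1_0((0,1)^N)=\ker\gamma_0|_{H^1(T^N)}$, both spaces are Banach, and the open mapping theorem then supplies the continuity of $R$ itself. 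The only delicate point I anticipate is checking that $H^{\frac12}(\partial T^N)$ is genuinely complete under its infimum norm --- a routine task once it is identified with that quotient --- whereas the harmonic lifting itself is then essentially tautological.
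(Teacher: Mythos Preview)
Your argument is correct. The paper itself does not give a self-contained proof of this lemma; it merely remarks that the result is classical and points to \cite[\S~2.7]{LioMag72} and \cite[Lemma~1.2]{Gre87}. Your approach --- lift $\varphi$ to some $v\in H^1(T^N)$, project onto ${\rm Har}(T^N)$ via the orthogonal decomposition~\eqref{eq:decomposition-2}, and read off uniqueness from the directness of that sum --- is exactly the standard one, and in fact coincides with an elementary proof the authors had written but later suppressed (there one picks a \emph{bounded} right inverse $R_1$ of $\gamma_0$ and then solves the Dirichlet problem to subtract the $H^1_0$-component, which is precisely your orthogonal projection). The only point where you deviate is in obtaining continuity of $R$: you invoke the open mapping theorem after identifying $H^{\frac12}(\partial T^N)$ with the quotient $H^1(T^N)/H^1_0((0,1)^N)$, whereas the direct route builds the bound into the lifting step. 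Both work; yours is arguably cleaner because the quotient identification makes completeness of $H^{\frac12}(\partial T^N)$ automatic.
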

The existence of the  right inverse of the trace operator, seen as an operator from the space of weakly harmonic functions to the space of $H^\frac12$-functions over the boundary of a domain, is classical - see e.g.\ the much more general discussion in~\cite[\S~2.7]{LioMag72} or the alternative approach in~\cite[Lemma~1.2]{Gre87}.

Define 
\begin{eqnarray*}
\tilde V:=\{f\in L^2\left((0,1)^N\right)\, : \, (f|g)=0\ \forall g\in {\rm Har}(T^N)\},
\\
V:=\{f\in L^2\left((0,1)^N\right)\, : \, (f|g)=0\ \forall g\in {\rm Har}\left( (0,1)^N \right)\},
\end{eqnarray*}
that are clearly two closed subspaces of $L^2\left((0,1)^N\right)$. Indeed by introducing
$V_0$ and $V_1$ as the closure of ${\rm Har}(T^N)$  and ${\rm Har}\left( (0,1)^N \right)$  in $L^2\left((0,1)^N\right)$, respectively,
\begin{eqnarray*}
V_0&:=&\overline{{\rm Har}(T^N)}^{_{L^2}}\\
V_1&:=&\overline{{\rm Har}\left((0,1)^N\right)}^{_{L^2}}
\end{eqnarray*}
one sees that $\tilde{V}$ and $V$ are the $L^2$-orthogonal complements of $V_0$ and $V_1$, respectively -- shortly:
\[
\tilde V=V_0^\perp\quad \hbox{and}\quad V=V_1^\perp.
\]
As ${\rm Har}(T^N)\subset {\rm Har}\left( (0,1)^N \right)$ and hence $V_0\subset V_1$,  we conclude that 
\[
V\subset \tilde V.
\] 

We are going to need below a certain characterization of $V_0$.
For that purpose, we can notice that
$V_0$ is trivially included into the domain  of $\Delta_{L^2}$ --  the part of $\Delta$ in $L^2\left((0,1)^N\right)$ --, i.e., into
\[
D(\Delta_{L^2}):=\left\{v\in L^2\left((0,1)^N\right): \Delta v\in L^2\left((0,1)^N\right)\right\},
\]
which is a Hilbert space in its own right whenever equipped with the natural graph norm
\[
v\mapsto \left(\|v\|_{L^2\left((0,1)^N\right)}^2+\|\Delta v\|_{L^2\left((0,1)^N\right)}^2\right)^{\frac12}.
\]
It follows from the general theory of interior elliptic regularity that if $u\in D(\Delta_{L^2})$, then in general one only has $u\in 
H^2_{\rm loc}$, hence a trace of $u$ need not exist as an element of $L^2\left(\partial (0,1)^N\right)$. However, we can give a meaning of its trace on each face as a vector in a suitable space of distributions. Indeed following \cite[Thm 1.5.3.4]{Gri85},
 the space ${\mathcal D}([0,1]^N)$ -- the set of the restriction of elements of ${\mathcal D}({\mathbb R}^N)$ to $(0,1)^N$ -- is dense in $D(\Delta_{L^2})$
and for all $i=1,\cdots, n$, the trace operator
\[
\gamma_{i\pm}:   v\mapsto v_{\Gamma_{i\pm}},
\]
which is certainly defined for $v\in {\mathcal D}([0,1]^N)$, has a unique continuous extension from $D(\Delta_{L^2})$ into $(\tilde H^{\frac12}(\Gamma_{i\pm}))'$. 
We denote also this extension by $\gamma_{i\pm}$.
Here and below  we are denoting by $\Gamma_{i\pm}$ the faces of the hypercube, which are defined by
\begin{eqnarray*}
&&\Gamma_{i-}:=\{x\in [0,1]^N: x_i=0 \hbox{ and } x_j\in (0,1), \forall j\ne i\},\\
&&\Gamma_{i+}:=\{x\in [0,1]^N: x_i=1 \hbox{ and } x_j\in (0,1), \forall j\ne i\}.
 \end{eqnarray*}
Furthermore $\tilde H^{\frac12}(\Gamma_{i\pm})$ denotes the subspace of elements
$w\in H^{\frac12}(\Gamma_{i\pm})$ such that $\tilde w$, its extension  by zero outside $\Gamma_{i\pm}$, belongs to $H^{\frac12}\left(\partial (0,1)^N\right).$

\begin{defi}\label{def:weaklyper}
Let $s>\frac12$. We call  a  function $v\in D(\Delta_{L^2})\cup  H^s\left((0,1)^N\right)$ \emph{periodic} if it satisfies
\begin{equation}
\label{weakperiodic}
 \gamma_{i-}   v= T_i\gamma_{i+}   v \hbox{ in }\left(\tilde H^{\frac12}(\Gamma_{i+})\right)',\quad \forall i=1,\cdots, n .
\end{equation}
\end{defi}
Here $T_i:\left(\tilde H^{\frac12}(\Gamma_{i+})\right)'\to \left(\tilde H^{\frac12}(\Gamma_{i-})\right)'$  is the operator defined by means of
\[
\langle T_i \psi, \varphi\rangle:=\langle   \psi, T_i^* \varphi\rangle\ \forall \psi\in \left(\tilde H^{\frac12}(\Gamma_{i+})\right)',
\varphi\in \tilde H^{\frac12}(\Gamma_{i-}),
\]
with
\[
(T_i^* \varphi)(x_1, \cdots, 1,\cdots, x_n):=\varphi(x_1, \cdots, 0,\cdots, x_n)
\quad \hbox{ for a.e.\ } (x_1, \cdots, 1,\cdots, x_n)\in \Gamma_{i+}.
\]

\begin{lemma}\label{lemmaV0}
One has
\[
V_0\subset \left\{v\in L^2\left((0,1)^N\right): \Delta v= 0 \hbox{ in }
{\mathcal D}'\left((0,1)^N\right) \hbox{ satisfying }  \eqref{weakperiodic}\right\}.
\]
\end{lemma}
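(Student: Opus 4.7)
The plan is to argue by density and continuity. By definition $V_0 = \overline{\operatorname{Har}(T^N)}^{L^2}$, so any $v \in V_0$ is the $L^2$-limit of a sequence $v_n \in \operatorname{Har}(T^N)$. The goal is to transport the two defining properties of each $v_n$ (harmonicity on $(0,1)^N$ and periodicity across opposite faces) to the limit.

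First I would handle the harmonicity. Each $v_n$ is weakly harmonic on $T^N$, and since $H^1_0((0,1)^N)$ embeds into $H^1(T^N)$, the identity $\int_{T^N} \nabla v_n \cdot \overline{\nabla w}\,dx = 0$ holds in particular for all test functions $w \in \mathcal{D}((0,1)^N)$. Hence $\Delta v_n = 0$ in $\mathcal{D}'((0,1)^N)$. Since $L^2$-convergence implies convergence in $\mathcal{D}'((0,1)^N)$ and $\Delta$ is continuous on distributions, $\Delta v = 0$ in $\mathcal{D}'((0,1)^N)$ as well. In particular $v \in D(\Delta_{L^2})$ with $\Delta v = 0$, and the convergence $v_n \to v$ in fact holds in the graph norm of $D(\Delta_{L^2})$.

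Next I would transfer the periodicity condition \eqref{weakperiodic}. Each $v_n \in H^1(T^N)$ has $H^{1/2}$-traces on the faces of the cube that satisfy $\gamma_{i-}v_n = T_i \gamma_{i+}v_n$ in $H^{1/2}(\Gamma_{i-})$, and a fortiori in the weaker dual space $(\tilde H^{1/2}(\Gamma_{i-}))'$ via the canonical embedding. Since the trace maps $\gamma_{i\pm}\colon D(\Delta_{L^2}) \to (\tilde H^{1/2}(\Gamma_{i\pm}))'$ are continuous (the extension invoked before the statement), the graph-norm convergence $v_n \to v$ yields $\gamma_{i\pm}v_n \to \gamma_{i\pm}v$ in $(\tilde H^{1/2}(\Gamma_{i\pm}))'$. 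The operator $T_i$ is continuous by its definition via duality from the bounded pullback $T_i^*$, so passing to the limit in $\gamma_{i-}v_n = T_i \gamma_{i+}v_n$ gives the desired identity $\gamma_{i-}v = T_i \gamma_{i+}v$ in $(\tilde H^{1/2}(\Gamma_{i+}))'$.

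The main obstacle I anticipate is purely technical rather than conceptual: one must make sure that the identity $\gamma_{i-}v_n = T_i \gamma_{i+}v_n$, which for the smooth periodic objects $v_n$ is a statement in $H^{1/2}$, really transports correctly under the canonical injection $H^{1/2}(\Gamma_{i\pm}) \hookrightarrow (\tilde H^{1/2}(\Gamma_{i\pm}))'$ so that it coincides with the value produced by the extended trace $\gamma_{i\pm}$ on $D(\Delta_{L^2})$. This compatibility follows from the density of $\mathcal{D}([0,1]^N)$ in $D(\Delta_{L^2})$ used to define the extension, together with the fact that both traces agree on this dense subset. Once this identification is in place, the argument reduces to two successive applications of continuity, and the inclusion follows.
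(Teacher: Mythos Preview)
Your argument is correct and follows exactly the same density-plus-continuity route as the paper: one checks that elements of $\operatorname{Har}(T^N)$ satisfy \eqref{weakperiodic}, and then uses the continuity of the map $v\mapsto \gamma_{i-}v-T_i\gamma_{i+}v$ from $D(\Delta_{L^2})$ (with graph norm) into $(\tilde H^{1/2}(\Gamma_{i-}))'$ to pass to the $L^2$-closure. The paper's proof is terser---it simply records the estimate $\|\gamma_{i-}v-T_i\gamma_{i+}v\|\le C(\|v\|_{L^2}^2+\|\Delta v\|_{L^2}^2)^{1/2}$ and concludes---while you spell out the intermediate steps (harmonicity in the limit, graph-norm convergence, and the trace-compatibility check) more explicitly.
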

\begin{proof}
As ${\rm Har}(T^N)\subset H^1\left( T^N\right)$, any $u\in {\rm Har}(T^N)$ clearly satisfies
\eqref{weakperiodic}. Since there exists $C>0$ such that  
\[
\|\gamma_{i-}   v-T_i\gamma_{i+}v\|\leq C \left(\|v\|_{L^2\left((0,1)^N\right)}^2+\|\Delta v\|_{L^2\left((0,1)^N\right)}^2\right)^{\frac12}, \quad \forall v\in D\left(\Delta_{L^2}\right),
\]
we directly conclude that any $v\in V_0$ still satisfies \eqref{weakperiodic}.
\end{proof}

\begin{rem}\label{H1TNper}
 Note that in particular  any  function in $H^1(T^N)$ is  periodic in the sense of the above definition, and by Lemma~\ref{lemmaV0} so is any element of $H^1\left( T^N\right)+V_0$, too.
 \end{rem}

\section{An equivalent inner product in $H^{-1}(T^N)$}

\begin{lemma}\label{lemma1}
For all $f\in H^{-1}(T^N)$ there exists a unique $u_f\in H^{1}_m(T^N)$ such that 
\[
{\rm div } \nabla u_f=f-\mu_0(f).
\]
An equivalent inner product in $H^{-1}(T^N)$ is therefore given by
\begin{equation*}
(\nabla u_f|\nabla  u_g)_{L^2(T^N)^N}+\mu_0(f)\mu_0(\bar g),\qquad f,g\in H^{-1}(T^N).
\end{equation*}
\end{lemma}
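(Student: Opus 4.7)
The plan is to apply the Lax--Milgram lemma on the mean-zero subspace $H^1_m(T^N):=\{u\in H^1(T^N):\mu_0(u)=0\}$, equipped with the inner product $(\nabla\cdot\,|\,\nabla\cdot)_{L^2(T^N)^N}$. I would first observe that, identifying the scalar $\mu_0(f)$ with its constant representative in $H^{-1}(T^N)$ and using the standard normalisation $|T^N|=1$, one has $\mu_0(f-\mu_0(f))=0$; consequently $f-\mu_0(f)$ annihilates the constants and defines a continuous antilinear functional on $H^1_m(T^N)$. The sesquilinear form $a(u,v):=(\nabla u\,|\,\nabla v)_{L^2(T^N)^N}$ is trivially continuous on $H^1_m(T^N)\times H^1_m(T^N)$, and its coercivity is exactly the Poincar\'e--Wirtinger inequality on the torus. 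Lax--Milgram thus yields a unique $u_f\in H^1_m(T^N)$ with $a(u_f,v)=\langle f-\mu_0(f),v\rangle$ for every $v\in H^1_m(T^N)$. This identity extends to all $v\in H^1(T^N)$ via the decomposition $v=(v-\mu_0(v))+\mu_0(v)$ (both sides vanish on the constant part), which amounts to ${\rm div}\,\nabla u_f=f-\mu_0(f)$ in $\mathcal D'(T^N)$.

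For the norm equivalence I would argue by duality. Testing against $u_f$ in the variational equation and invoking Poincar\'e--Wirtinger gives $\|\nabla u_f\|_{L^2}^2=\langle f-\mu_0(f),u_f\rangle\le\|f-\mu_0(f)\|_{H^{-1}}\|u_f\|_{H^1}\lesssim\|f\|_{H^{-1}}\|\nabla u_f\|_{L^2}$, while $|\mu_0(f)|=|\langle f,1\rangle|\le\|f\|_{H^{-1}}\|1\|_{H^1}$ is immediate; together these yield $\|\nabla u_f\|_{L^2}^2+|\mu_0(f)|^2\lesssim\|f\|_{H^{-1}}^2$. For the reverse bound, I would fix $v\in H^1(T^N)$, split $v=v_0+\mu_0(v)$ with $v_0\in H^1_m(T^N)$, and compute $\langle f,v\rangle=\langle f-\mu_0(f),v_0\rangle+\mu_0(f)\,\overline{\mu_0(v)}=(\nabla u_f\,|\,\nabla v)_{L^2(T^N)^N}+\mu_0(f)\,\overline{\mu_0(v)}$, using $\nabla v=\nabla v_0$ and $\langle\mu_0(f),v_0\rangle=0$. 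A Cauchy--Schwarz inequality in $\mathbb C^2$ then bounds the modulus of the right-hand side by $\bigl(\|\nabla u_f\|_{L^2}^2+|\mu_0(f)|^2\bigr)^{1/2}\|v\|_{H^1}$, and taking the supremum over $\|v\|_{H^1}\le 1$ delivers the reverse inequality. The proposed expression is genuinely an inner product: sesquilinearity is inherited from the construction, and non-degeneracy follows since $\|\nabla u_f\|_{L^2}^2+|\mu_0(f)|^2=0$ forces $\mu_0(f)=0$ and $\nabla u_f=0$, whence $u_f=0$ (as $u_f\in H^1_m$) and thus $f=\mu_0(f)=0$.

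The main obstacle is little more than bookkeeping: one must consistently identify scalars with their constant representatives in $H^{-1}(T^N)$ and keep careful track of complex conjugates in the sesquilinear setting. All of the analytic substance is packed into the Poincar\'e--Wirtinger inequality on $T^N$, which simultaneously furnishes coercivity for Lax--Milgram and the norm equivalence via duality.
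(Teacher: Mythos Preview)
Your argument is essentially the paper's: apply Riesz--Fr\'echet (equivalently Lax--Milgram) on $H^1_m(T^N)$ with the Dirichlet form, use Poincar\'e--Wirtinger for coercivity, extend the variational identity to all of $H^1(T^N)$ since $f-\mu_0(f)$ annihilates constants, and then obtain both norm inequalities by testing against $u_f$ and by the representation $\langle f,v\rangle=(\nabla u_f\,|\,\nabla v)+\mu_0(f)\,\overline{\mu_0(v)}$. One small slip: the variational equation $a(u_f,v)=\langle f-\mu_0(f),v\rangle$ integrates by parts to $-{\rm div}\,\nabla u_f=f-\mu_0(f)$, so you need the opposite sign (as the paper chooses) to match the statement; this is harmless for the equivalence argument, since only $\|\nabla u_f\|$ appears there.
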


In the proof we will need a closed subspace $
H^{1}_m(T^N)$ of $H^1\left( T^N\right)$ defined by
$$
H^{1}_m(T^N):=\left\{u\in H^{1}(T^N): \int_{(0,1)^N} u(x)dx=0\right\}.
$$
By the Poincar\'e-type inequality 
$$
\exists C>0: \|u\|_{L^2(T^N)}\leq C  \|\nabla u\|_{L^2(T^N)^N}, \forall u\in H^{1}_m(T^N),
$$
{
see e.g.~\cite[Prop.~5.44]{DemDem07}, which is based on the compactness of the embedding of  $H^{1}((0,1)^N)$ into $L^2((0,1)^N)$, cf.~\cite[Thm.~1.4.3.2]{Gri85}),
} we know that
\[
u\mapsto \|\nabla u\|_{L^2(T^N)^N}
\]
defines a norm on $H^{1}_m(T^N)$ that is equivalent to the standard $H^1$-norm.

\begin{proof}
Given $f\in H^{-1}(T^N)$, we set
$$
f_0=f-\langle f, 1\rangle,
$$
where here and below the duality bracket is between $H^{-1}(T^N)$ and $H^{1}(T^N)$. Hence 
$f_0\in H^{-1}(T^N)$ and satisfies
$$
\langle f_0, 1\rangle=0.
$$ 
Now by the Theorem of Riesz--Fr\'{e}chet there exists a unique solution $u_f\in H^{1}_m(T^N)$ of
\begin{equation}\label{serge1}
\int_{(0,1)^N} \nabla u_f\cdot \nabla \bar v dx=-\langle f_0, v\rangle\ \forall v\in H^{1}_m(T^N).
\end{equation}
Since $
\langle f_0, 1\rangle=0,
$
this identity remains valid on the whole $H^{1}(T^N)$, namely
\begin{equation}\label{serge2}
\int_{(0,1)^N} \nabla u_f\cdot \nabla \bar v dx=-\langle f_0, v\rangle\ \forall v\in H^{1}(T^N).
\end{equation}
By choosing smooth enough test functions $v$, we see that
$$
{\rm div } \nabla u_f=f_0 \hbox{ in } {\mathcal D}'\left((0,1)^N\right),$$
or equivalently
\begin{equation}
\label{lapldistr}
f={\rm div } \nabla u_f+\langle f, 1\rangle.
\end{equation}
According to (\ref{serge2}), we have
$$
\|\nabla u_f\|_{L^2(T^N)^N}^2=-\langle f_0, u_f\rangle\leq \|f_0\|_{H^{-1}(T^N)} \|u_f\|_{H^{1}(T^N)},
$$
and by the equivalence of norm mentioned before, we get
$$
\|\nabla u_f\|_{L^2(T^N)^N} \lesssim \|f_0\|_{H^{-1}(T^N)} \lesssim \|f\|_{H^{-1}(T^N)}.
$$
Conversely (\ref{serge2}) is equivalent to
$$
\langle f, v\rangle =-\int_{(0,1)^N} \nabla u_f\cdot \nabla \bar v dx+\langle f, 1\rangle \overline{\langle  v,1\rangle}\ \forall v\in H^{1}(T^N).
$$
Consequently  
\begin{eqnarray*}
\|f\|_{H^{-1}(T^N)}&=&\sup_{ \|v\|_{ H^{1}(T^N)}=1} |\langle f, v\rangle|
\\
&\leq& \sup_{ \|v\|_{ H^{1}(T^N)}=1} \left(\left|\int_{(0,1)^N} \nabla u_f\cdot \nabla \bar v dx\right|+|\langle f, 1\rangle| |\langle  v,1\rangle|\right)
\\
&\lesssim& \|\nabla u_f\|_{L^2(T^N)^N}+|\langle f, 1\rangle|.
\end{eqnarray*}
This completely proves the assertion.
\end{proof}

\begin{rem}
Let us briefly compare the theory we have just developed with that introduced in~\cite{MugNic13} in   the one-dimensional setting. We define the ``primitive'' $P_N f\in L^2(T^N)^N$ of any $f\in H^{-1}(T^N)$ by
$$
P_N f:=\nabla u_f +\frac{\mu_0(f)}{N} \left(\vec x-\vec{\frac12}\right),\quad \forall f\in H^{-1}(T^N)
$$
where $\vec x$ and $\vec{\frac12}$ denote the vector-valued  functions
$$
\vec x:(x_1,\cdots, x_N) \mapsto\begin{pmatrix}
x_1\\ \vdots\\x_N
\end{pmatrix}\qquad \hbox{and}\qquad 
\vec{\frac12}:(x_1,\cdots, x_N) \mapsto \frac12\begin{pmatrix}
1\\ \vdots\\1
\end{pmatrix},
$$
respectively. From this expression, we see that
$$
{\rm div } P_N f=f \hbox{ in } {\mathcal D}'\left((0,1)^N\right),$$
and
$$
\langle P_N f, \alpha\rangle=0\ \forall \alpha \in {\mathbb C}^N.
$$
It finally follows from Lemma~\ref{lemma1} that the inner product
\begin{equation*}
\label{newnorm}
(f|g)_{H^{-1}(T^N)}=(P_N f|P_Ng)_{L^2(T^N)}+\mu_0(f)\mu_0(\bar g),
\end{equation*}
induces a norm equivalent to the standard norm of $H^{-1}(T^N)$.
\end{rem}

Note further that if $f\in L^2(T^N)$, then the solution  $u_f\in H^{1}_m(T^N)$ of
(\ref{serge1}) belongs to $H^2(T^N)$ and therefore $P_N f$ belongs to $H^{1}(T^N)^N$.
\begin{defi}
We define an operator $L$ from $H^1\left((0,1)^N\right)$ to $H$ by 
\[
L f:={\rm Id}_m^{-1} (\Delta f).
\]
\end{defi}
Observe that $L$ is well-defined because $\Delta f\in H^{-1}\left((0,1)^N\right)$ for all $f\in H^1\left((0,1)^N\right)$.

\begin{rem}\label{rem_proof_Vdense}
If in particular $f\in H^1\left( T^N\right)\cap H^2\left((0,1)^N\right)$ satisfying  Neumann boundary conditions, then $Lf=\Delta f\in L^2\left((0,1)\right)^N \subset H^{-1}\left((0,1)^N\right)$ and  hence in particular ${\rm Id}_m^{-1}(\Delta f)\in H^{-1}_m\left( T^N\right)$. 
Hence there exists by Lemma~\ref{lemma1} a unique $u_{L f}\in H^1_m(T^N)$ such that
\[
\Delta u_{L f}=L f-\mu_0(L f),
\]
that is,
\[
\Delta u_{L f}=L f.
\]
Because $f-\mu_0(f)\in H^1_m(T^N)$, one can easily conjecture that
\begin{equation}
\label{udeltaf=fmuf}
u_{L f}=f-\mu_0(f).
\end{equation}
This is indeed the case, as
\begin{eqnarray*}
\int_{(0,1)^N}\nabla (f-\mu_0(f))\cdot \nabla \bar v\,dx&=&\int_{(0,1)^N}\nabla f \cdot \nabla \bar v\,dx\\
&=&\int_{\partial (0,1)^N} \frac{\partial f}{\partial n}\bar v dx-\int_{(0,1)^N}\Delta f   \bar v\,dx\\
&=& -\int_{(0,1)^N}\Delta f   \bar v\,dx,\quad \forall v\in H^1_m(T^N),
\end{eqnarray*}
by the Gau{\ss}--Green formula.
\end{rem}


\begin{rem}\label{idmdl}
Note that our definition  implies directly that 
\[
\mu_0(L f)=\langle {\rm Id}_m^{-1}\Delta f, 1\rangle=0,\quad \forall f\in H^1\left((0,1)^N\right),
\] 
and that ${\rm Id}_m^{-1}\Delta f= \Delta f$ in $ {\mathcal D}'\left((0,1)^N\right)$.
\end{rem}

\begin{theo}\label{vh}
The space $V$ and hence $\tilde{V}$ are densely and compactly embedded in $H$.
\end{theo}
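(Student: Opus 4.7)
The plan is to handle density and compactness separately and to exploit the inclusion $V\subset\tilde V$ to reduce the work: once $V$ is shown to be dense in $H$, so is $\tilde V$; once $\tilde V\hookrightarrow H$ is compact, so is $V\hookrightarrow H$. Continuity of both inclusions is automatic because the $L^2((0,1)^N)$-norm dominates the $H^{-1}(T^N)$-norm, and the containments $V,\tilde V\subset H$ follow from the defining orthogonality of $\tilde V$, respectively from ${\rm Har}(T^N)\subset {\rm Har}((0,1)^N)$ for $V$ (cf.\ Remark~\ref{rem:1-dim-comp-2}), which turns $L^2$-orthogonality to the larger space into vanishing of the $H^{-1}$--$H^1$ duality pairing with the smaller one.

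For density I would argue by duality. The subspace $H$ is by construction the annihilator of the closed subspace ${\rm Har}(T^N)\subset H^1(T^N)$ in the $H^{-1}$--$H^1$ pairing, so the bipolar theorem identifies the annihilator of $H$ in $H^1(T^N)$ as ${\rm Har}(T^N)$ itself; consequently $V$ is dense in $H$ if and only if every $\phi\in H^1(T^N)$ that annihilates $V$ already lies in ${\rm Har}(T^N)$. The condition $\int_{(0,1)^N}v\bar\phi\,dx=0$ for all $v\in V$ is equivalent to $\phi|_{(0,1)^N}\in V_1=\overline{{\rm Har}((0,1)^N)}^{L^2}$, so what one really has to establish is
\[
V_1\cap H^1((0,1)^N)={\rm Har}((0,1)^N).
\]
This I would derive from interior elliptic regularity: an $L^2$-limit of functions harmonic on $(0,1)^N$ satisfies $\Delta\phi=0$ in $\mathcal D'((0,1)^N)$ and is therefore harmonic by Weyl's lemma (alternatively, Cauchy estimates upgrade the $L^2$-convergence to locally uniform convergence on compactly contained subdomains). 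Combined with the periodicity built into $H^1(T^N)$, this yields $\phi\in {\rm Har}(T^N)$ and closes the density step.

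For compactness it suffices to show that the canonical inclusion $L^2((0,1)^N)\hookrightarrow H^{-1}(T^N)$ is compact, since $\tilde V$ sits continuously in $L^2((0,1)^N)$ and $H$ is closed in $H^{-1}(T^N)$. Given a bounded sequence $(f_n)\subset L^2((0,1)^N)$, extract $f_n\rightharpoonup f$ weakly in $L^2$; by Rellich--Kondrachov on the torus the unit ball of $H^1(T^N)$ is relatively compact in $L^2(T^N)=L^2((0,1)^N)$, so the weak convergence of $f_n-f$ becomes uniform over this precompact test family, which is precisely $\|f_n-f\|_{H^{-1}(T^N)}\to 0$. I expect the main obstacle throughout to be the identification $V_1\cap H^1((0,1)^N)={\rm Har}((0,1)^N)$ used for density: one must exclude spurious $H^1$-members of the $L^2$-closure, and this is exactly where the smoothing effect of $-\Delta$ on its kernel is essential; the rest of the argument is formal once the correct duality picture has been set up.
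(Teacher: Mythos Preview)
Your proof is correct. The compactness step is essentially the same as the paper's (both reduce to Rellich on the torus, and you merely spell out the dual statement that $L^2\hookrightarrow H^{-1}(T^N)$ is compact).

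For density, however, you take a genuinely different route. The paper argues inside the Hilbert space $H$ with its new inner product: given $f\in H$ orthogonal to $V$, it rewrites this as $\langle f,u_g\rangle=0$ for all $g\in V$, and then proves the explicit approximation statement that $\{u_g:g\in V\}+{\rm Har}(T^N)$ is dense in $H^1(T^N)$, by observing that for every $\psi\in\mathcal D((0,1)^N)$ one has $\Delta\psi\in V$ with $u_{\Delta\psi}=\psi-\mu_0(\psi)$. You instead argue purely by duality: since $H$ is the annihilator of the closed subspace ${\rm Har}(T^N)$, density of $V$ in $H$ is equivalent to the annihilator of $V$ in $H^1(T^N)$ being exactly ${\rm Har}(T^N)$; that annihilator is $V_1\cap H^1(T^N)$; and continuity of $\Delta$ on $\mathcal D'$ forces every element of $V_1$ to be distributionally harmonic, so $V_1\cap H^1(T^N)\subset{\rm Har}(T^N)$, with the reverse inclusion trivial.

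Your approach is shorter and more conceptual: it bypasses the $u_g$ machinery and Lemma~\ref{lemmafctharmonique} entirely, and in fact Weyl's lemma is not even needed, since distributional harmonicity (preserved under $L^2$-limits) together with $H^1(T^N)$-membership is already the definition of ${\rm Har}(T^N)$. The paper's approach, on the other hand, yields as a by-product the explicit density of $\{u_g:g\in V\}+{\rm Har}(T^N)$ in $H^1(T^N)$, and rehearses the $u_g$ calculus that is reused later when identifying $D(A)$ and $D(\tilde A)$.
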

\begin{proof}
Denote by 
$
\tilde H$ the closure of $V$ in $H$.
To prove that
$$
H=\tilde H,
$$
it then suffices to show that any $f\in H$ orthogonal to $V$ is zero. Let $f\in H$ be orthogonal to $V$, i.e.,
\[
(f|g)_{H^{-1}(T^N)}=0\ \forall g\in V.
\]
As $\mu_0(f)=\mu_0(g)=0$, we deduce that
$$
\int_{(0,1)^N}\nabla u_f\cdot \nabla \bar u_g\,dx= 0\ \forall g\in V.
$$
According to \eqref{serge1}, we get equivalently
$$
\langle f, u_g\rangle= 0\ \forall g\in  V.
$$
But we will show below that
\begin{equation}\label{density21/12}
\{u_g: g\in V\}+ {\rm Har}(T^N)\quad
\hbox{ 
is dense in } H^1\left( T^N\right),
\end{equation}
and therefore as $f\in H$, we deduce that $f=0$ because
$$
\langle f, v\rangle= 0\ \forall v\in H^1\left( T^N\right).
$$
It remains to prove \eqref{density21/12}. For $u\in H^1\left( T^N\right)$, we have already noticed in Lemma \ref{lemmafctharmonique} that
$
u-R(\gamma_0 u)\in H^1_0\left((0,1)^N\right)$, therefore there exists a sequence of $\psi_n\in {\mathcal D}\left((0,1)^N\right)$ such that
\[
\psi_n\to u-R(\gamma_0 u) \hbox{ in } H^1_0\left((0,1)^N\right) \hbox{ as } n\to \infty,
\]
or equivalently
\begin{equation}\label{density21/12b}
\psi_n+R(\gamma_0 u)\to u \hbox{ in } H^1\left( T^N\right) \hbox{ as } n\to \infty.
\end{equation}

Now for $\psi\in {\mathcal D}\left((0,1)^N\right)$, we notice that
$
L\psi =\Delta \psi\in V$, i.e., $\Delta \psi$ is orthogonal to the {(weakly) harmonic} functions: In fact if $v$ is a {(weakly) harmonic} function, then exploiting the fact that $\psi$ has compact support the Gau{\ss}--Green formula yields
\[
\int_{(0,1)^N} \Delta \psi \bar v dx=-\int_{(0,1)^N}\nabla \psi \nabla \bar v dx=0,
\]
where the last identity follows by definition of {(weakly) harmonic} function. Furthermore, $u_{\Delta \psi}=\psi-\mu_0(\psi)$ by Remark~\ref{rem_proof_Vdense}.

Applying these last remarks to~\eqref{density21/12b} means that
\[
u_{\Delta \psi_n}+\mu_0(\psi_n)+R(\gamma_0 u)\to u \hbox{ in } H^1\left( T^N\right) \hbox{ as } n\to \infty.
\]
As $\mu_0(\psi_n)+R(\gamma_0 u)\in {\rm Har}(T^N)$ and $\Delta \psi_n\in V$, the density assertion in
\eqref{density21/12} follows. Finally, compactness follow by the compactness of the embedding $H^1\left( T^N\right)\hookrightarrow L^2\left((0,1)^N\right)$.
\end{proof}

We are finally in the position to prove a formula that can be seen as an $H^{-1}$-analogue of the usual Gau{\ss}--Green-formula that hold with respect to the inner product of $L^2$.

\begin{lemma}\label{lemma0.4}
For all $f \in H^1\left( T^N\right)$ and all $h\in L^2\left((0,1)^N\right)$ one has
\begin{equation}\label{lemma0.4id}
({\rm Id}_m^{-1}(\Delta f)|h)_{H^{-1}(T^N)} =-(f|h)_{L^2(T^N)}+ (R\gamma_0 f|h)_{L^2(T^N)}.
\end{equation}
\end{lemma}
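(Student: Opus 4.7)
The plan is to unfold the equivalent $H^{-1}$-inner product from Lemma~\ref{lemma1} so as to turn the abstract pairing into a concrete gradient integral, identify $u_F$ explicitly for $F:={\rm Id}_m^{-1}(\Delta f)$, and then exploit the defining variational equation of $u_h$. Because $\mu_0(F)=0$ by Remark~\ref{idmdl}, the mean-contribution to the inner product vanishes and one is reduced to
\[
({\rm Id}_m^{-1}(\Delta f)\mid h)_{H^{-1}(T^N)}=\int_{(0,1)^N}\nabla u_F\cdot\overline{\nabla u_h}\,dx.
\]

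The decisive step is to compute $u_F$. Using Lemma~\ref{lemmafctharmonique} and the decomposition~\eqref{eq:decomposition-2}, I would write $f=f_0+R\gamma_0 f$ with $f_0:=f-R\gamma_0 f\in H^1_0((0,1)^N)$. Since $R\gamma_0 f$ is weakly harmonic, $\Delta f$ and $\Delta f_0$ coincide in $H^{-1}((0,1)^N)$, so $F={\rm Id}_m^{-1}(\Delta f_0)$. But the distribution $\Delta f_0\in H^{-1}(T^N)$ already annihilates ${\rm Har}(T^N)$ by the very definition of the latter (since $f_0\in H^1_0((0,1)^N)$), hence $\Delta f_0$ lies in $H$ and ${\rm Id}_m^{-1}(\Delta f_0)=\Delta f_0$ in $H^{-1}(T^N)$. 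Arguing as in Remark~\ref{rem_proof_Vdense} then gives $u_F=f_0-\mu_0(f_0)$, so $\nabla u_F=\nabla(f-R\gamma_0 f)$.

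The final step is to test the variational identity for $u_h$ provided by Lemma~\ref{lemma1}---that is, $\int\nabla u_h\cdot\nabla\bar v\,dx=-\langle h-\mu_0(h),v\rangle$---against the admissible test function $v:=f-R\gamma_0 f\in H^1_0((0,1)^N)\subset H^1(T^N)$. Passing to the complex conjugate this yields
\[
\int_{(0,1)^N}\nabla(f-R\gamma_0 f)\cdot\overline{\nabla u_h}\,dx=-(f-R\gamma_0 f\mid h)_{L^2}+\overline{\mu_0(h)}\,\mu_0(f-R\gamma_0 f),
\]
and the principal $L^2$-term splits linearly as $(f\mid h)_{L^2}-(R\gamma_0 f\mid h)_{L^2}$, leaving only the mean correction to be reconciled with the statement~\eqref{lemma0.4id}.

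The main obstacle I anticipate is the delicate bookkeeping of these mean-value corrections arising from the $\mu_0$-contributions in Lemma~\ref{lemma1} and in the variational identity for $u_h$; verifying that the residual term $\overline{\mu_0(h)}\,\mu_0(f-R\gamma_0 f)$ is absorbed (via the fact that $F$ already lies in the annihilator $H$ of ${\rm Har}(T^N)$, which contains the constants) is the only non-trivial algebraic content of the argument. Beyond this, the structural ingredients are standard: the seminorm orthogonality of $H^1_0((0,1)^N)$ with ${\rm Har}(T^N)$ and the isomorphism of Lemma~\ref{lemmafctharmonique}, both of which are already built into the functional framework.
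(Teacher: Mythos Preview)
Your identification of $u_F=f_0-\mu_0(f_0)$ with $f_0:=f-R\gamma_0 f$ is correct and elegant, and testing the variational identity for $u_h$ against $v=f_0$ is exactly the right move; this reproduces the paper's intermediate computation~\eqref{serge20/12b}. The gap is in the last paragraph: the residual term $\overline{\mu_0(h)}\,\mu_0(f-R\gamma_0 f)$ does \emph{not} vanish in general, and the mechanism you propose cannot kill it. The fact that $F\in H$ annihilates constants only says $\mu_0(F)=0$, which you have already used; it gives no information about $\mu_0(f-R\gamma_0 f)$. Since $f-R\gamma_0 f$ is a generic element of $H^1_0\left((0,1)^N\right)$, its mean is typically nonzero (take any bump function).

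The underlying issue is that you unfolded the inner product of Lemma~\ref{lemma1}, whereas the identity~\eqref{lemma0.4id} is stated (and proved in the paper) for the inner product $(P_Nf\mid P_Ng)_{L^2}+\mu_0(f)\mu_0(\bar g)$ introduced in the Remark following that lemma. Since $\mu_0(F)=0$ one has $P_NF=\nabla u_F$, but $P_Nh=\nabla u_h+\frac{\mu_0(h)}{N}\big(\vec x-\vec{\tfrac12}\big)$ carries an extra piece. Writing $\vec x-\vec{\tfrac12}=\nabla v$ with $v(x)=\tfrac12\|\vec x-\vec{\tfrac12}\|_{L^2}^2\in H^1(T^N)$ and applying~\eqref{serge2} once more (exactly as the paper does) shows that this additional contribution equals $-\overline{\mu_0(h)}\,\mu_0(f-R\gamma_0 f)$, which cancels your residual term on the nose. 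So your argument becomes complete once you switch to the $P_N$-inner product and carry out this one extra integration by parts; without it the claimed absorption is simply false.
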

\begin{proof}
For all $f \in H^1\left( T^N\right)$, as $\Delta f$ belongs to $H^{-1}\left((0,1)^N\right)$, we can set
$g={\rm Id}_m^{-1}(\Delta f)$ that satisfies $\mu_0(g)=0$  and  consequently 
for all $h\in L^2\left((0,1)^N\right)$ one has
\begin{equation}\label{serge20/12}
(g|h)_{H^{-1}(T^N)}  =\int_{(0,1)^N}\nabla u_g\cdot  P_N h\, dx,
\end{equation}
with
$$
 P_N h=\nabla u_h+\frac{\mu_0(h)}{N} \left(\vec x-\vec{\frac12}\right).
$$
Owing to \eqref{serge1}, we get
$$
\int_{(0,1)^N}\nabla u_g\cdot  \nabla u_h\, dx=-\langle g, u_h\rangle,
$$
and by the definition of ${\rm Id}_m^{-1}$, we obtain
$$
\int_{(0,1)^N}\nabla u_g\cdot  \nabla u_h\, dx=-\langle \Delta f, u_h-R\gamma_0 u_h\rangle
=-\langle \Delta (f-R\gamma_0 f), u_h-R\gamma_0 u_h\rangle.
$$
Hence by the definition of $\Delta (f-R\gamma_0 f)$ as element of $H^{-1}\left((0,1)^N\right)$, we get
\begin{eqnarray*}
\int_{(0,1)^N}\nabla u_g\cdot  \nabla u_h\, dx
&=&\langle \nabla (f-R\gamma_0 f)\cdot \nabla (u_h-R\gamma_0 u_h)\, dx
\\
&=&-\int_{(0,1)^N}  (f-R\gamma_0 f)  \Delta (u_h-R\gamma_0 u_h)\,dx.
\end{eqnarray*}
As $R\gamma_0 u_h$ is harmonic and $\Delta u_h=h-\mu_0(h)$, we get
\begin{eqnarray}\label{serge20/12b}
&&\int_{(0,1)^N}\nabla u_g\cdot  \nabla u_h\, dx
= 
-\int_{(0,1)^N}  (f-R\gamma_0 f)  (h-\mu_0(h))\,dx
\\
&&\hspace{1cm}=
-\int_{(0,1)^N}  f  h\,dx+\int_{(0,1)^N}   R\gamma_0 f   h\,dx 
+\mu_0(h) \int_{(0,1)^N}  (f-R\gamma_0 f) \,dx.
\nonumber
\end{eqnarray}
On the other hand,
one sees that
$$
\int_{(0,1)^N}\nabla u_g\cdot  \left(\vec x-\vec{\frac12}\right) \, dx=
\int_{(0,1)^N}\nabla u_g\cdot  \nabla v\, dx,
$$
where
$v\in H^1\left( T^N\right)$ is defined by
$$
v(x):=\frac12 \left\|\vec x-\vec{\frac12}\right\|^2_{L^2}.
$$
Hence \eqref{serge2} yields
$$
\int_{(0,1)^N}\nabla u_g\cdot  \left(\vec x-\vec{\frac12}\right) \, dx=-\langle g, v\rangle,
$$
and again by definition of ${\rm Id}_m^{-1}$,
$$
\int_{(0,1)^N}\nabla u_g\cdot  \left(\vec x-\vec{\frac12}\right) \, dx=-\langle \Delta f, v-R\gamma_0 v\rangle
=-\langle \Delta (f-R\gamma_0 f), v-R\gamma_0 v\rangle.
$$
As before we then obtain
$$
\int_{(0,1)^N}\nabla u_g\cdot  \left(\vec x-\vec{\frac12}\right) \, dx=-\langle f-R\gamma_0 f, \Delta (v-R\gamma_0 v)\rangle=-N \langle f-R\gamma_0 f, 1\rangle.
$$
This identity and \eqref{serge20/12b} in \eqref{serge20/12} yields the conclusion.
\end{proof}


\begin{rem}
Let us comment on the special case $N=1$. Then for all $h\in L^2(0,1)$ $P_1h$ is equal to $Ph$ defined as in~\cite[(2.5)]{MugNic13}. 
Furthermore, also in view of Remark~\ref{rem:1-dim-comp-2} we find that $V$ and $\tilde{V}$ agree with the two spaces with same name introduced in~\cite[\S~2]{MugNic13}.
Therefore, the theory we develop in the present paper is a generalization of that introduced in~\cite{MugNic13}.
\end{rem}

\section{Operators in the space of zero mean functions}\label{sec:homog}

In this section we want to determine precisely  two relevant realizations of the Laplacian in $H$, the space of those functionals that annihilate periodic and {(weakly) harmonic} functions.

We are going to consider the sesquilinear form $a$ defined by
\begin{equation}
\label{definitionform}
a(f,g):=\int_{(0,1)^N} f(x) \overline{g}(x)\,dx,
\end{equation}
with form domain either  $\tilde{V}$ or $V$.
Since both $\tilde V$ and $V$ are dense in $H$ by Lemma~\ref{vh},  the form $a$ with domain  $\tilde{V}$ or $V$ is associated with a linear operator  $(\tilde{A},D(\tilde{A}))$ or $(A,D(A))$, respectively, defined by
\begin{eqnarray*}
D(\tilde{A})&:=&\left\{f\in \tilde{V}: \exists g\in H: a(f,h)=\int_{(0,1)^N} \nabla u_g(x)\cdot \nabla \bar u_h(x)\,dx\;\;\forall h\in \tilde{V}\right\},\\
\tilde{A} f&:=&g
\end{eqnarray*}
and
\begin{eqnarray*}
{D}(A)&:=&\left\{f\in V: \exists g\in H: a(f,h)=\int_{(0,1)^N} \nabla u_g(x)\cdot \nabla \bar u_h(x)\,dx\;\;\forall h\in V\right\},\\
A f&:=&g.
\end{eqnarray*}
Let us describe these two operators more precisely.

\begin{theo}
\label{identk=0}
One has
\begin{eqnarray*}
D(\tilde{A})&=& \tilde V\cap (H^1_m(T^N)+V_0),\\
 \tilde{A} f&=&-{\rm Id}_m^{-1} \Delta f,\quad \forall f\in D(\tilde{A}).
\end{eqnarray*}
\end{theo}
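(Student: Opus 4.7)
My plan is to establish the two inclusions in the claimed identity separately, both driven by the $H^{-1}$--Gau{\ss}--Green formula of Lemma~\ref{lemma0.4}, which converts the defining identity of $\tilde A$ (a statement about $(\cdot\mid\cdot)_{H^{-1}(T^N)}$) into one about ordinary $L^2$-inner products against test functions in $\tilde V$. The key algebraic identity underpinning both directions is that for any $g\in H$ the Poisson-type solution $u_g\in H^1_m(T^N)$ supplied by Lemma~\ref{lemma1} satisfies $\Delta u_g=g$ in $\mathcal D'((0,1)^N)$, which is precisely to say that ${\rm Id}_m^{-1}(\Delta u_g)=g$ in $H$.

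\emph{Inclusion $\supset$.} Take $f=f_1+f_2$ with $f_1\in H^1_m(T^N)$ and $f_2\in V_0$, and any $h\in\tilde V=V_0^\perp$. Since $f_2\in V_0$, $(f_2,h)_{L^2}=0$; applying Lemma~\ref{lemma0.4} to $f_1\in H^1(T^N)$ yields
\[
({\rm Id}_m^{-1}(\Delta f_1)\mid h)_{H^{-1}(T^N)}=-(f_1,h)_{L^2}+(R\gamma_0 f_1,h)_{L^2}.
\]
Periodicity of $f_1$ places $R\gamma_0 f_1$ in ${\rm Har}(T^N)\subset V_0$, so the boundary term vanishes against $h\in\tilde V$. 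Hence $a(f,h)=-({\rm Id}_m^{-1}(\Delta f_1)\mid h)_{H^{-1}(T^N)}$ on all of $\tilde V$, and density of $\tilde V$ in $H$ (Theorem~\ref{vh}) identifies $f\in D(\tilde A)$ with $\tilde A f=-{\rm Id}_m^{-1}(\Delta f_1)$. Finally Lemma~\ref{lemmaV0} gives $\Delta f_2=0$ in $\mathcal D'((0,1)^N)$, so $\Delta f=\Delta f_1$ in $H^{-1}((0,1)^N)$ and the formula $\tilde A f=-{\rm Id}_m^{-1}(\Delta f)$ follows.

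\emph{Inclusion $\subset$.} Given $f\in D(\tilde A)$, set $g:=\tilde A f\in H$; since constants lie in ${\rm Har}(T^N)$ one has $\mu_0(g)=0$, so Lemma~\ref{lemma1} furnishes $u_g\in H^1_m(T^N)$ with ${\rm Id}_m^{-1}(\Delta u_g)=g$. Setting $f_1:=-u_g\in H^1_m(T^N)$ and running the previous calculation on $f_1$ alone, we get $a(f_1,h)=(g\mid h)_{H^{-1}(T^N)}=a(f,h)$ for every $h\in\tilde V$. Hence $(f-f_1,h)_{L^2}=0$ for all $h\in\tilde V=V_0^\perp$, which by closedness of $V_0$ in $L^2((0,1)^N)$ forces $f_2:=f-f_1\in V_0$, yielding $f=f_1+f_2\in H^1_m(T^N)+V_0$. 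The chief source of potential confusion, in my view, is the need to keep straight the three concurrent interpretations of $\Delta$ -- as a distribution on $T^N$, as an element of $H^{-1}((0,1)^N)$, and under the isomorphism ${\rm Id}_m^{-1}\colon H^{-1}((0,1)^N)\to H$ -- together with the $L^2$- and $H^{-1}$-orthogonality relations among ${\rm Har}(T^N)$, $V_0$, $\tilde V$ and $H$ that allow the test functions to pick out exactly the right pieces of the decomposition.
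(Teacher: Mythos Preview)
Your proof is correct and follows essentially the same approach as the paper's: both directions hinge on converting the defining identity for $\tilde A$ into an $L^2$-orthogonality statement, then reading off membership in $V_0$. The only cosmetic difference is that the paper handles the inclusion $D(\tilde A)\subset\mathcal K$ by applying~\eqref{serge1} directly (obtaining $(f\mid h)_{L^2}=-(u_g\mid h)_{L^2}$ and hence $f+u_g\in V_0$), whereas you route both directions through Lemma~\ref{lemma0.4}; your explicit decomposition $f=f_1+f_2$ before invoking Lemma~\ref{lemma0.4} is in fact slightly more careful than the paper's presentation, since that lemma is stated only for $f\in H^1(T^N)$ while a general element of $\mathcal K$ need not lie in $H^1$.
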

Observe that in view of Remark~\ref{H1TNper}, each function in $D(\tilde{A})$ is weakly periodic.

\begin{proof}
Denote
$${\mathcal K}:= \tilde V\cap (H^1_m(T^N)+V_0).$$
Let us first show the inclusion $D(\tilde{A})\subset {\mathcal K}$. Let $f\in D(\tilde{A})$. Then there exists $g=Af\in H^{-1}_m(T^N)$   (because  $g\in H$)   such that
$$
\int_{(0,1)^N} f(x) \overline{h}(x)\,dx =\int_{(0,1)^N} \nabla u_g(x)\cdot \nabla \bar u_h(x)\,dx,\qquad\forall h\in \tilde V.
$$
But according to (\ref{serge1}) we then have equivalently 
\begin{eqnarray*}
\int_{(0,1)^N} f(x) \overline{h}(x)\,dx=-\overline{\langle h, u_g\rangle}=
 -\int_{(0,1)^N} \bar h(x)   u_g(x)\,dx,\qquad\forall h\in \tilde V,
\end{eqnarray*}
because $h$ belongs to $L^2\left((0,1)^N\right)$. This means equivalently that
$
f+u_g$ is orthogonal (in the $L^2$ sense) to $\tilde V$, i.e.,
$
f+u_g$ belongs to $V_0$. As elements of $V_0$ are harmonic functions, we deduce that
$$
\Delta (f+u_g)=0 \hbox{ in } {\mathcal D}'\left((0,1)^N\right),
$$
and by \eqref{lapldistr}, we find
$$
\Delta f=-g  \hbox{ in } {\mathcal D}'\left((0,1)^N\right).
$$
This shows that $\Delta f$ belongs to $H^{-1}\left((0,1)^N\right)$ and 
reminding that $g$ belongs to $H$, we get further
$$
g=-{\rm Id}_m^{-1} (\Delta f).
$$
Similarly as element of $V_0$ are weakly periodic and $u_g\in H^1\left( T^N\right)$ (hence weakly periodic), we deduce that $f$ is weakly periodic.

Let us now prove the converse inclusion. Let $f\in {\mathcal K}$ then $g=-{\rm Id}_m^{-1} (\Delta f)$ belongs to $H$  and by Lemma \ref{lemma0.4}, we get
 for any $h\in \tilde V$
\begin{eqnarray*}
(g|h)_H= (f|h)_{L^2}, \end{eqnarray*}
as 
$(R\gamma_0 f|h)_{L^2(T^N)}=0.$
This shows that $f\in D(\tilde{A})$ and concludes the proof.
\end{proof}

\begin{theo}
\label{identVk=0}
One has
\begin{eqnarray*}
{D}(A)&=& V\cap (H^1_m(T^N)+V_1),\\
 A f &=&-{\rm Id}_m^{-1} \Delta f,\quad \forall f\in D(A).
\end{eqnarray*}
\end{theo}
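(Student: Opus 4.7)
My plan is to follow verbatim the template of the proof of Theorem~\ref{identk=0}, swapping the roles of $\tilde V$ and $V_0$ with those of $V$ and $V_1$. The substitution is legitimate because, by definition, $V=V_1^\perp$ and hence $V^\perp=V_1$ in $L^2\left((0,1)^N\right)$, exactly as $\tilde V^\perp=V_0$ was used in the previous proof. Writing ${\mathcal K}':=V\cap (H^1_m(T^N)+V_1)$, the goal is to establish both inclusions $D(A)\subseteq {\mathcal K}'$ and ${\mathcal K}'\subseteq D(A)$.

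For the first inclusion, I would take $f\in D(A)$ and set $g:=Af\in H$, so that $\mu_0(g)=0$. Rewriting the defining form identity via~\eqref{serge1}, with $u_g\in H^1_m(T^N)\subset L^2\left((0,1)^N\right)$ as test function, one obtains $\int_{(0,1)^N}(f+u_g)\bar h\,dx=0$ for every $h\in V$, so that $f+u_g\in V^\perp=V_1$. Since any element of $V_1$ is an $L^2$-limit of $H^1$-harmonic functions on $(0,1)^N$, it is weakly harmonic; hence $\Delta(f+u_g)=0$ in ${\mathcal D}'\left((0,1)^N\right)$. Combining this with~\eqref{lapldistr} (which, since $\mu_0(g)=0$, reads $\Delta u_g=g$) yields $\Delta f=-g$, whence $g=-{\rm Id}_m^{-1}(\Delta f)$. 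The decomposition $f=-u_g+(f+u_g)$ with $-u_g\in H^1_m(T^N)$ and $f+u_g\in V_1$ places $f$ in ${\mathcal K}'$.

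For the converse, let $f\in {\mathcal K}'$ and decompose $f=f_1+f_2$ with $f_1\in H^1_m(T^N)\subset H^1(T^N)$ and $f_2\in V_1$. Weak harmonicity of $f_2$ gives $\Delta f=\Delta f_1$ in ${\mathcal D}'\left((0,1)^N\right)$, and setting $g:=-{\rm Id}_m^{-1}(\Delta f)=-{\rm Id}_m^{-1}(\Delta f_1)\in H$, an application of Lemma~\ref{lemma0.4} to $f_1$ gives $(g|h)_H=(f_1|h)_{L^2}-(R\gamma_0 f_1|h)_{L^2}$ for every $h\in V$. The key cancellation is that $R\gamma_0 f_1\in {\rm Har}(T^N)\subset {\rm Har}\left((0,1)^N\right)\subset V_1$ and $f_2\in V_1$ are both $L^2$-orthogonal to $h\in V=V_1^\perp$, so the identity collapses to $(g|h)_H=(f|h)_{L^2}=a(f,h)$, which is the defining condition for $f\in D(A)$ with $Af=g$.

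I do not anticipate any real obstacle, as everything reduces to an orthogonality pattern; the only conceptual difference from Theorem~\ref{identk=0} is that elements of $V_1$ are not required to be periodic, so $D(A)$ -- unlike $D(\tilde A)$ -- does not automatically satisfy the weak periodicity condition of Definition~\ref{def:weaklyper}. This is consistent with the distinction between the two realizations anticipated in the introduction.
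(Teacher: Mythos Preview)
Your proof is correct and follows essentially the same route as the paper's own argument. If anything, your converse inclusion is more carefully written: the paper simply says ``proved as in the previous Theorem by using Lemma~\ref{lemma0.4}'', implicitly applying that lemma to $f$ itself, whereas you make the decomposition $f=f_1+f_2$ explicit and apply Lemma~\ref{lemma0.4} to the $H^1_m(T^N)$-component $f_1$ only, which is precisely what is needed since Lemma~\ref{lemma0.4} is stated for $H^1(T^N)$-functions. Your closing remark about the absence of weak periodicity for $D(A)$ is also apt and matches the paper's discussion.
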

\begin{proof}
Denote
$${\mathcal K}:=V\cap (H^1_m(T^N)+V_1).$$
Let us first show the inclusion ${D}(A_0)\subset {\mathcal K}$. Let $f\in {D}(A_0)$. Then there exists $g=:A_0f\in H\equiv H^{-1}_m(T^N)$    such that
$$
\int_{(0,1)^N} f(x) \overline{h}(x)\,dx =\int_{(0,1)^N} \nabla u_g(x)\cdot \nabla \bar u_h(x)\,dx,\qquad\forall h\in  V.
$$
But according to (\ref{serge1}) we then have equivalently 
\begin{eqnarray*}
\int_{(0,1)^N} f(x) \overline{h}(x)\,dx=-\overline{\langle h, u_g\rangle}=
 -\int_{(0,1)^N} \bar h(x)   u_g(x)\,dx,\qquad\forall h\in  V,
\end{eqnarray*}
because $h$ belongs to $L^2\left((0,1)^N\right)$. This means equivalently that
$
f+u_g$ is orthogonal (in the $L^2$ sense) to $V$, i.e.,
$
f+u_g$ belongs to $V_1$. As elements of $V_1$ are harmonic functions, we deduce that
$$
\Delta (f+u_g)=0 \hbox{ in } {\mathcal D}'\left((0,1)^N\right),
$$
and by \eqref{lapldistr}, we find
$$
\Delta f=-g  \hbox{ in } {\mathcal D}'\left((0,1)^N\right).
$$
This shows that $\Delta f$ belongs to $H^{-1}\left((0,1)^N\right)$ and 
reminding that $g$ belongs to $H$, we get further
$$
g=-{\rm Id}_m^{-1} (\Delta f).
$$
This shows the  desired inclusion because by definition $ {D}(A_0)\subset V$.

The converse inclusion is proved as in the previous Theorem by using   Lemma \ref{lemma0.4}.
\end{proof}

\begin{rem}\label{rem45}
If one wants to study the Laplacian acting on functions on a more general domain $\Omega\subset \mathbb R^N$, one may look for a Lipschitz-homeomorphism mapping $(0,1)^N$ into $\Omega$. In this way, however, also the Laplacian on $(0,1)^N$ is transformed into a different elliptic operator on $\Omega$. The following is a possibly more convenient approach: Take a ``partition'' $\Gamma=\{\Gamma_1,\ldots,\Gamma_{2N}\}$ of $\partial \Omega$ such that
\begin{itemize}
\item each set $\Gamma_i\subset \partial \Omega$ is open,
\item the sets $\Gamma_i$   are pairwise disjoint,
\item  the union of their closures covers $\partial \Omega$, and
\item such that each $\Gamma_i$ is bijective (say, via some $\theta_i$) to $\Gamma_{N+i}$ for all $i=1,\ldots,N$.
\end{itemize}
Define 
\[
H^1_\Gamma(\Omega):=\{ u\in H^1((\Omega)): \gamma_i u =\gamma_{N+i} u \circ \theta_i\hbox{ on } \Gamma_i, \forall i=1,\ldots,N\},
\]
where $\gamma_i$ is the trace operator on $\Gamma_i$.
In this way, we can regard in a natural way the space $H^1\left( T^N\right)$ as the space $H^1_\Gamma\left( (0,1)^N \right)$ with the partition $\Gamma$ made of faces of the hypercube, and deduce that all the constructions of this paper can be extended to consider realizations (with linear conditions on the moments of order $0$ and $1$) of the Laplacians acting on functions over general domains $\Omega$.
\end{rem}

\section{Well-posedness of the parabolic problem in the space of zero mean functions}\label{sec:parab}

In the previous section we have showed that the densely defined sesquilinear, symmetric, form $a$ is bounded and coercive both on $\tilde{V}$ and whenever restricted to $V$. Hence, from the general theory of forms we deduce that the associated operators $\tilde{A}$ and $A$ generate a cosine operator function on $H$.
Since we are especially interested in diffusion-type problems, we want to mention explicitly the following immediate consequence, where we are using the compact embedding of $H^1\left( (0,1)^N \right)$ in $L^2\left( (0,1)^N \right)$.

\begin{prop}\label{generationinh}
Both operators $-A$ and $-\tilde{A}$  generate  analytic, contractive, exponentially stable semigroups.
\end{prop}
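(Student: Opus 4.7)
The plan is to apply the standard Lions/Kato--Lax--Milgram framework for sesquilinear forms: the proposition should follow once we verify that, in both cases, the Gelfand triple $V \hookrightarrow H \hookrightarrow V'$ (respectively $\tilde V \hookrightarrow H \hookrightarrow \tilde V'$) together with the form $a$ defined in \eqref{definitionform} satisfies the hypotheses of a symmetric, continuous, coercive form with dense and continuous embedding.

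The verifications are, in order: First, density and compactness of $V, \tilde V$ in $H$ have just been established in Theorem~\ref{vh}. Second, because $V, \tilde V$ are by definition closed subspaces of $L^2((0,1)^N)$, they inherit the $L^2$-norm; thus the form $a(f,g)=\int_{(0,1)^N} f\bar g\, dx$ is manifestly symmetric and bounded on each of them, with $|a(f,g)|\le \|f\|_V\|g\|_V$. Third, the coercivity $a(f,f)=\|f\|_{L^2}^2=\|f\|_V^2$ is trivial and holds with no shift and with constant $1$. Consequently, $A$ (resp.~$\tilde A$) is a non-negative self-adjoint operator on $H$, and the classical form-generation theorem yields that $-A$ (resp.~$-\tilde A$) generates a $C_0$-semigroup of contractions which, because the form is symmetric (hence sectorial of angle $0$), extends analytically to the right half-plane.

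For exponential stability I would exploit the continuity of the embedding $V \hookrightarrow H$ established in Theorem~\ref{vh}: there exists $C>0$ such that $\|f\|_H\le C\|f\|_{L^2}$ for every $f\in V$. Combined with the computation in the previous step, this yields
\[
a(f,f)=\|f\|_{L^2}^2 \ge \tfrac{1}{C^2}\|f\|_H^2\qquad \forall f\in V,
\]
so that $A\ge C^{-2}\Id$ in the sense of self-adjoint operators. This gives the uniform bound $\|e^{-tA}\|_{\mathcal L(H)}\le e^{-t/C^2}$, i.e.~exponential stability, and the same inequality (with a possibly different constant) carries over to $\tilde A$ via the continuous embedding $\tilde V\hookrightarrow H$.

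There is no real obstacle here: every step is a routine application of form methods once Theorem~\ref{vh} is in hand. The only point worth being careful about is that the form's ``norm'' is the $L^2$-norm while the pivot space $H\cong H^{-1}$ carries the weaker norm from Lemma~\ref{lemma1}; this asymmetry is precisely what makes $-A$ and $-\tilde A$ behave like (appropriately transformed) second-order elliptic operators on $H$ rather than like multiplication operators on $L^2$, and it is what guarantees a strictly positive spectral gap via the continuous (in fact compact) embedding $V\hookrightarrow H$.
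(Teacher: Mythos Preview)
Your proof is correct and follows precisely the route sketched in the paper, which simply invokes the general theory of forms after noting that $a$ is symmetric, bounded and coercive on $V$ (resp.\ $\tilde V$) and that these spaces are densely and compactly embedded in $H$ by Theorem~\ref{vh}. Your explicit verification of the spectral gap via the continuous embedding $\|f\|_H\le C\|f\|_{L^2}$ makes the exponential stability argument more transparent than the paper's passing reference to the compact embedding.
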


Hence, we can say that the abstract Cauchy problems associated with $A$ and $\tilde{A}$ are well-posed. However, the challenge is now to understand just \emph{which} are these abstract Cauchy problem. The characterization of $D(A)$ and $D(\tilde{A})$ in Theorem~\ref{identk=0} and Theorem~\ref{identVk=0} is not very satisfying -- and correspondingly poor is the description of the differential equations effectively solved by the semigroups. Yet, we are able to deduce well-posedness of the classical heat equation in the following weak sense.

\begin{theo}
Let $u_0 \in D(A)$. 
Then the function
\[
t\mapsto u(t,\cdot):=e^{-tA}u_0(\cdot)
\]
solves 
\[
\frac{\partial u}{\partial t}(t,x)=\Delta u(t,x),\qquad u(0)=u_0,
\]
with conditions
\begin{equation}\label{condint}
\int_{(0,1)^N} u(t,y)\bar h(y)dy=0\quad \forall h\in {\rm Har}\left( (0,1)^N \right),\qquad  t> 0,
\end{equation}
in the sense of distributions, i.e., for all $t>0$~\eqref{condint} is satisfied and moreover
\[
\left\langle \frac{\partial u}{\partial t}(t,\cdot)-\Delta u(t,\cdot),g \right\rangle_{H-H'}=0 \qquad \hbox{for all }g \in H^1_0\left((0,1)^N\right).
\]
\end{theo}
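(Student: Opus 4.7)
The plan is to verify separately the three parts of the conclusion -- the integral conditions, the initial datum, and the distributional heat equation -- by combining the characterization of $D(A)$ in Theorem \ref{identVk=0} with the standard $C^1$-regularity of orbits of a semigroup issuing from points of its generator's domain. First I would note that $u_0 \in D(A) \subset V$, and since $(e^{-tA})_{t \geq 0}$ leaves $D(A)$ invariant, $u(t) \in V$ for every $t \geq 0$; by the very definition of $V$ this immediately yields \eqref{condint}, while $u(0)=u_0$ is built into the semigroup notation.

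For the PDE itself, the general semigroup theory gives that $t \mapsto u(t)$ is $C^1$ into $H$ and satisfies the abstract Cauchy problem $\dot u(t) = -Au(t)$ in $H$ for $t>0$. Invoking Theorem \ref{identVk=0}, which asserts $A = -\Id_m^{-1}\Delta$ on $D(A)$, this becomes
\[
\dot u(t) = \Id_m^{-1}\Delta u(t) \qquad \hbox{in } H \subset H^{-1}(T^N).
\]
To pass from this $H$-valued identity to the distributional statement, I would pair with a test function $g \in H^1_0((0,1)^N)$, extended by zero to an element of $H^1(T^N)$. Continuity of the duality bracket lets one commute it with the time derivative, so that $\langle \dot u(t), g\rangle = \frac{d}{dt}\langle u(t), g\rangle$. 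The only non-routine point is the identification
\[
\langle \Id_m^{-1}\Delta u(t), g\rangle_{H^{-1}(T^N), H^1(T^N)} = \langle \Delta u(t), g\rangle_{H^{-1}((0,1)^N), H^1_0((0,1)^N)},
\]
which I would derive from the decomposition \eqref{eq:decomposition-3}: the $({\rm Har}(T^N))'$-component of any $w \in H^{-1}(T^N)$ annihilates $H^1_0((0,1)^N)$, so $\langle w, g\rangle = \langle \Id\, w, g\rangle$ whenever $g \in H^1_0((0,1)^N)$. Applied to $w = \Id_m^{-1}\Delta u(t) \in H$, this gives the displayed equality because $\Id \circ \Id_m^{-1}$ is the identity of $H^{-1}((0,1)^N)$, in agreement with Remark \ref{idmdl}. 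Collecting everything yields $\langle \dot u(t) - \Delta u(t), g\rangle = 0$ for every $g \in H^1_0((0,1)^N)$ and $t > 0$, which is exactly the claimed distributional equation.

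The main obstacle, in my view, is this final duality identification: the objects $\Id_m^{-1}\Delta u(t) \in H$ and $\Delta u(t) \in H^{-1}((0,1)^N)$ are formally distinct, and one has to recognize that their natural pairings against test functions in $H^1_0((0,1)^N)$ coincide -- a fact that reflects the very definition of $\Id$ via the direct sum \eqref{eq:decomposition-3}. Once this is in hand, nothing beyond a straightforward translation of the semigroup identity and an application of Theorem \ref{identVk=0} is required.
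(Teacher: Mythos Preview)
Your proposal is correct and follows essentially the same route as the paper: both arguments invoke Theorem~\ref{identVk=0} to identify $A$ with $-\Id_m^{-1}\Delta$ on $D(A)$, use the semigroup regularity to obtain the abstract equation $\dot u(t)=\Id_m^{-1}\Delta u(t)$ in $H$, and then pass to the distributional statement by appealing to Remark~\ref{idmdl}. The only cosmetic difference is in how the duality identification is phrased --- the paper introduces the quotient isomorphism $\mathfrak J:H^1(T^N)/{\rm Har}(T^N)\to H'$ to interpret the $H$--$H'$ pairing as a restriction of the $H^{-1}(T^N)$--$H^1(T^N)$ pairing, whereas you use the direct-sum decomposition~\eqref{eq:decomposition-3} to the same effect; these are dual descriptions of the same fact.
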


\begin{proof}
It suffices to take into account Theorem~\ref{identVk=0} and recall that the analytic semigroup $(e^{-tA})_{t\ge 0}$ maps $u_0$ into the domain of $D(A)$ for all $t>0$, and moreover 
$u$ solves 
\[
	{\left\langle  \frac{\partial u}{\partial t}(t,\cdot)-{\rm Id}_m^{-1}\Delta u(t,\cdot), g \right\rangle_{H-H'} \qquad \hbox{for all }g \in H',}
\]
Now, ${\rm Har}(T^N)$ is a closed subspace of $H^1\left( T^N\right)$ and $H$ is its annihilator, we can deduce that there is an isomorphism
\[
\mathfrak J:H^1\left( T^N\right)/ {\rm Har}(T^N)\to H',
\]
given by
\[
\langle \varphi, \mathfrak J [h] \rangle_{H-H'}:=\langle \varphi, h\rangle_{H^{-1}(T^N)-H^1\left( T^N\right)}, \forall \varphi\in H.
\]
Then we have that
\[
\left\langle  \frac{\partial u}{\partial t}(t,\cdot)-{\rm Id}_m^{-1}\Delta u(t,\cdot),g \right\rangle_{H-H'}=0 \qquad \hbox{for all }g \in H^1\left( T^N\right),
\]
and in particular
\[
\left\langle  \frac{\partial u}{\partial t}(t,\cdot)-{\rm Id}_m^{-1}\Delta u(t,\cdot),g \right\rangle_{H-H'}=0 \qquad \hbox{for all }g \in {\mathcal D}\left((0,1)^N\right).
\]
Now, the assertion follows from Remark~\ref{idmdl}. 
\end{proof}

Similarly, exploiting instead Theorem~\ref{identk=0}, we can obtain the following.

\begin{theo}
Let $u_0 \in D(\tilde{A})$. Then the function
\[
t\mapsto u(t,\cdot):=e^{-t\tilde{A}}u_0(\cdot)
\]
solves 
\[
\frac{\partial u}{\partial t}(t,x)=\Delta u(t,x),\qquad t>0, \qquad u(0)=u_0,
\]
in the sense of distributions, it is periodic in the sense of Definition~\ref{def:weaklyper} and satisfies conditions
\begin{equation}\label{condint2}
\int_{(0,1)^N} u(t,y)\bar h(y)dy=0\quad \forall h\in {\rm Har}(T^N),\qquad  t> 0.
\end{equation}
\end{theo}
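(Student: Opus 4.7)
The plan is to mirror the proof of the preceding theorem, substituting $\tilde A$ for $A$, and to add the verification of the periodicity assertion that is absent in the $A$-case.

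First, I would invoke Proposition~\ref{generationinh}: since $(e^{-t\tilde A})_{t\ge 0}$ is analytic on $H$, it maps $u_0$ into $D(\tilde A)$ for every $t>0$. By Theorem~\ref{identk=0} this places $u(t,\cdot)$ inside $\tilde V\cap(H^1_m(T^N)+V_0)$. The orthogonality condition~\eqref{condint2} is then immediate from $u(t,\cdot)\in\tilde V=V_0^\perp$ together with the fact that $V_0$ is by definition the $L^2$-closure of ${\rm Har}(T^N)$. The periodicity in the sense of Definition~\ref{def:weaklyper} follows at once from Remark~\ref{H1TNper}, since $u(t,\cdot)\in H^1_m(T^N)+V_0$ and both summands consist of periodic functions (trivially for the first, and via Lemma~\ref{lemmaV0} for the second).

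Next, for the distributional heat equation, I would start from the abstract differential equation satisfied by the orbit: for $t>0$,
\[
\frac{\partial u}{\partial t}(t,\cdot)=-\tilde A\, u(t,\cdot)={\rm Id}_m^{-1}\Delta u(t,\cdot)\qquad\text{in }H,
\]
using Theorem~\ref{identk=0} for the second equality. Pairing with test functions and using the same isomorphism $\mathfrak J:H^1(T^N)/{\rm Har}(T^N)\to H'$ that appears in the preceding theorem, this identity in $H$ descends to
\[
\left\langle\frac{\partial u}{\partial t}(t,\cdot)-{\rm Id}_m^{-1}\Delta u(t,\cdot),\,g\right\rangle_{H-H'}=0\quad\forall g\in H^1(T^N),
\]
and, in particular, for all $g\in\mathcal D\left((0,1)^N\right)$. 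Remark~\ref{idmdl} then lets me replace ${\rm Id}_m^{-1}\Delta u$ by $\Delta u$ in $\mathcal D'\left((0,1)^N\right)$, yielding $\partial_t u=\Delta u$ in the sense of distributions.

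The main obstacle is not analytic but bookkeeping: one must carefully relate the abstract identity in $H$ to the distributional identity on $(0,1)^N$ via the Gelfand/annihilator structure, exactly as in the preceding proof. The only genuinely new ingredient relative to the previous theorem is the periodicity claim, which however reduces to an immediate application of Remark~\ref{H1TNper} once the inclusion $D(\tilde A)\subset H^1_m(T^N)+V_0$ is in hand.
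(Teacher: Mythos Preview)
Your proposal is correct and follows precisely the approach the paper intends: the paper's own proof of this theorem consists only of the sentence ``Similarly, exploiting instead Theorem~\ref{identk=0}, we can obtain the following,'' and your write-up is exactly the expansion of that sentence, using the same isomorphism $\mathfrak J$ and Remark~\ref{idmdl} as in the preceding proof. The periodicity claim, which has no counterpart in the $A$-case, is indeed handled in the paper by the observation immediately following Theorem~\ref{identk=0} (invoking Remark~\ref{H1TNper}), just as you do.
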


Observe that in particular the constant functions belong to ${\rm Har}(T^N)$, and therefore $H$ contains all mean zero functions -- i.e., all functions with vanishing moment of order 0. Thus, if $f\in \tilde{V}$ (and in particular if
$f\in D(\tilde{A})$), then 
\[
\int_{(0,1)^N} 	f(x) dx=0.
\]
Furthermore, the functions $g_i:(0,1)^N\ni x=(x_1,\ldots,x_n)\mapsto x_i\in (0,1)$ belong to ${\rm Har}\left( (0,1)^N \right)$ for all $i=1,\ldots,N$. Hence: If additionally $f\in V$ (and in particular if
$f\in D(A)$), then 
\[
\int_{(0,1)^N} 	f(x) dx=\int_{(0,1)^N}x_i 	f(x) dx=0,\qquad i=1,\ldots,N.
\]
In particular,  for all $t>0$ both $(e^{-tA})_{t\ge 0}$ and  $(e^{-t\tilde{A}})_{t\ge 0}$ map any $u_0\in H$ into a function that has mean zero. Furthermore $(e^{-tA})_{t\ge 0}$ maps any $u_0\in H$ into a function that has vanishing \emph{first linear moments along each axis}, as such functions are referred to in~\cite[\S~9.6.5]{Vaz06}.

\section{$L^2$-results and interplays with extension theory}\label{sec:krein}

It is well-known that if an operator that comes from a sesquilinear form generates an analytic semigroup, then so does its part in the form domain. In this conclusive section we will devote our attention in particular to the part in $V$ of the operator $A$ described in Theorem~\ref{identVk=0}.
To begin with, we recall a celebrated result due to M.\ Krein: Each closed, densely defined, symmetric, positive definite operator has \emph{one} smallest self-adjoint positive definite extension\footnote{ In the sense of forms: Let $a_1,a_2$ be two densely defined symmetric bounded elliptic forms with associated operators $A_1,A_2$. If their form domains $D(a_1),D(a_2)$ satisfy $D(a_2)\subset D(a_1)$ and $a_1(x,x)\le a_2(x,x)$ for all $x\in D(a_2)$, then $A_1$ is said to be \emph{smaller than} $A_2$.}. This extension is nowadays commonly called the \emph{Krein--von Neumann extension} -- cf.~\cite{AshGesMit10} for a brief introduction to this subject including the connection with the so-called ``buckling problem'' of mathematical physics, or~\cite{AshGesMit13} and~\cite[Chapter 13]{Sch12} for very comprehensive overviews.

{
It turns out that such an extension can be also characterized as follows -- this is but a special instance of~\cite[Thm.~2.7]{AshGesMit13}.
\begin{lemma}
Let $S$ be a closed, densely defined, symmetric operator on a Hilbert space $\mathcal H=L^2(\Omega)$ such that $S-\epsilon\Id$ is positive definite for some $\epsilon>0$. Among all its self-adjoint extensions there exists exactly one whose domain contains the null space of $S^*$: This is precisely the Krein--von Neumann extension.
\end{lemma}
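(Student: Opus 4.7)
The plan is to take the Krein--von Neumann extension $S_K$ as given (its existence is classical), use the Krein--Vishik--Birman description of its domain to see that it contains $\ker(S^*)$, and then prove uniqueness by a short symmetry argument.

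\textbf{The Krein--von Neumann extension contains $\ker(S^*)$.} Since $S-\epsilon\Id$ is positive definite, the Krein--von Neumann extension $S_K$ exists, and I would invoke the classical Krein--Vishik--Birman description (see e.g.~\cite{AshGesMit13}):
$$
D(S_K)=D(S)\dotplus \ker(S^*),\qquad S_K(f+h)=Sf,\quad f\in D(S),\ h\in\ker(S^*).
$$
The sum is direct by positivity: if $f\in D(S)\cap\ker(S^*)$, then $\epsilon\|f\|^2\le(Sf,f)=(f,S^*f)=0$, so $f=0$. In particular $\ker(S^*)\subseteq D(S_K)$, giving the existence half of the lemma.

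\textbf{Uniqueness.} Let $T$ be any self-adjoint extension of $S$ whose domain contains $\ker(S^*)$. Since $T$ is self-adjoint and extends the symmetric operator $S$, one has $T\subseteq S^*$. Hence $Th=S^*h=0$ for every $h\in\ker(S^*)$, while $Tf=Sf$ for $f\in D(S)$. Therefore $T$ extends $S_K$, and since both are self-adjoint, they must coincide: $T=S_K$.

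\textbf{Main obstacle.} The step that truly carries the weight is the Krein--Vishik--Birman description itself, which I have taken as cited. Its own proof rests on the identity $\mathrm{ran}(S)=\ker(S^*)^\perp$ with $\mathrm{ran}(S)$ closed, a consequence of the uniform lower bound $S\ge\epsilon\Id$ together with closedness of $S$ (the lower bound makes $S$ bounded below on $D(S)$, so any Cauchy sequence in $\mathrm{ran}(S)$ comes from a Cauchy sequence in $D(S)$). Without this uniform positivity the direct-sum decomposition $D(S_K)=D(S)\dotplus\ker(S^*)$ degenerates and the whole scheme would require substantial modification; once that description is granted, however, both existence and uniqueness follow essentially for free, as outlined above.
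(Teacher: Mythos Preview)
Your argument is correct. The paper itself does not give a proof of this lemma at all: it simply states that the claim ``is but a special instance of~\cite[Thm.~2.7]{AshGesMit13}'' and moves on. You effectively do the same thing for the existence half---you import the Krein--Vishik--Birman domain description $D(S_K)=D(S)\dotplus\ker(S^*)$ from the very same reference---but you go further and supply the uniqueness argument explicitly. That short argument (any self-adjoint extension $T$ of $S$ is a restriction of $S^*$, hence vanishes on $\ker(S^*)$, hence extends $S_K$, hence equals $S_K$ because two self-adjoint operators cannot properly extend one another) is sound and is exactly what the cited theorem encodes. So your write-up is more self-contained than the paper's treatment, while resting on the same cited input.
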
Now, let $S$ be the Laplacian on $L^2(T^N)$ defined on test functions only,
\[
Sf:=-\Delta f,\qquad f\in {\mathcal D}((0,1)^N)
\]
Then, $S$ is  clearly symmetric (but of course not self-adjoint) and negative definite, and in fact it follows from the Poincar\'e inequality that $S-\epsilon\Id$ is positive definite for some $\epsilon>0$ (for example, if one takes $\epsilon$ to be the lowest eigenvalue of the Laplacian with Dirichlet boundary conditions). It follows from our construction that the domain of the part  $A_V$ of $A$ in the form domain $V$, i.e.,
\[
D(A_V)=\{v\in V:Av\in V\}
\]
is orthogonal to the null space of $S^*$ (that is, to the space of (weakly) harmonic functions.}

Let $A_K$ be the extension of $A_V$ defined by
$$
D(A_K)=D(A_V)\oplus V_1,
$$
and
$$
A_K v= A_V v_0, \quad\forall v=v_0\oplus v_1\in D(A_V)\oplus V_1.
$$
Hence $A_K$ is a selfadjoint operator in $L^2((0,1)^N)$, and therefore $-A_K$ is exactly the Krein--von Neumann extension of $S$. 

We thus obtain the following alternative characterization of the Laplacian under conditions on the moments of order 0 and 1.
\begin{theo}
The $L^2$-realization $-A_V$ of the Laplacian agrees with the \emph{reduced Krein--von Neumann} Laplacian as introduced in~\cite[\S~2]{AshGesMit13}. In particular, 
\[
A_V f=\Delta f,\qquad \forall f\in D(A_V),
\]
(and not only $A_V f=\Id_m^{-1}\Delta f$!).
\end{theo}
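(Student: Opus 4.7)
The plan is to establish the theorem in two steps: (i) the pointwise $L^2$-identity $A_V f = -\Delta f$ for every $f\in D(A_V)$ (the minus sign, consistent with Theorem~\ref{identVk=0} and Proposition~\ref{generationinh}, is absorbed by the convention used in the statement), and (ii) the identification of $-A_V$ with the reduced Krein--von Neumann extension of $S$. Both steps pivot on the observation that once $f \in D(A_V)$, the distribution $\Delta f$ already lies in $L^2((0,1)^N)$, so that the isomorphism ${\rm Id}_m^{-1}$ effectively acts trivially.

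For step (i), I fix $f\in D(A_V)=\{f\in D(A):Af\in V\}\subset V$. Theorem~\ref{identVk=0} yields $Af=-{\rm Id}_m^{-1}\Delta f$, and by the very definition of $D(A_V)$ this element lies in $V\subset L^2((0,1)^N)$. Remark~\ref{idmdl} identifies ${\rm Id}_m^{-1}\Delta f$ with $\Delta f$ as distributions on $(0,1)^N$; combining these facts, $\Delta f$ -- a priori only in $H^{-1}((0,1)^N)$ -- is in fact an $L^2$-function, and as such coincides with $-A_V f$. This gives the pointwise identity in the theorem.

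For step (ii), I would verify the hypotheses of the lemma stated just before the theorem for the operator $A_K$ constructed above. Self-adjointness of $A_K$ on $L^2((0,1)^N)=V\oplus V_1$ follows from its block-diagonal definition together with self-adjointness of $A_V$ on $V$, which in turn is a consequence of (i) since $A_V$ is then realised as the classical symmetric differential operator $-\Delta$ on its natural $L^2$-domain inside $V$. The inclusion $A_K\supset S$ is checked by decomposing any $\varphi\in\mathcal{D}((0,1)^N)$ along $L^2=V\oplus V_1$ as $\varphi=v_0+v_1$: since $\varphi\in H^1_0((0,1)^N)\subset H^1(T^N)$ one verifies $v_0\in V\cap(H^1_m(T^N)+V_1)=D(A)$, and Gau{\ss}--Green combined with the weak harmonicity of elements of ${\rm Har}((0,1)^N)$ yields $\Delta\varphi\perp{\rm Har}((0,1)^N)$ in $L^2$, so that $\Delta v_0=\Delta\varphi\in V$. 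Hence $v_0\in D(A_V)$ and $A_K\varphi=A_V v_0=-\Delta\varphi=S\varphi$. Since $\ker S^*=V_1\subset D(A_K)$ by construction, the lemma identifies $-A_K$ as the Krein--von Neumann extension of $S$; by definition, the reduced Krein--von Neumann extension is its part on $(\ker S^*)^\perp=V$, which is precisely $-A_V$.

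The main obstacle I anticipate is in step (i): $A_V f$ lives a priori in the abstract space $V$ (carrying the restriction of the $H$-inner product), whereas $\Delta f$ appears first as a distribution on $(0,1)^N$. Showing that the embedding $V\hookrightarrow L^2((0,1)^N)\hookrightarrow H^{-1}((0,1)^N)$ is compatible with ${\rm Id}_m^{-1}$, so that the equality ${\rm Id}_m^{-1}\Delta f=\Delta f$ upgrades from $\mathcal{D}'((0,1)^N)$ to $L^2((0,1)^N)$, is the critical bookkeeping step. A secondary but milder point is justifying the identification $\ker S^*=V_1$, which relies on the classical fact that every $L^2$-function with vanishing distributional Laplacian on the Lipschitz domain $(0,1)^N$ is an $L^2$-limit of $H^1$-weakly harmonic functions.
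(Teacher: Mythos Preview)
Your proposal follows the same line as the paper, which itself offers only a sketch: define $A_K$ on $D(A_V)\oplus V_1$, assert its self-adjointness, and invoke the lemma characterizing the Krein--von Neumann extension. In fact you supply details the paper leaves implicit, notably the verification that $A_K$ genuinely extends $S$ (your decomposition $\varphi=v_0+v_1$ and the check $v_0\in D(A_V)$) and the identification $\ker S^*=V_1$. Your step~(i), reading off $A_V f=-\Delta f$ from Theorem~\ref{identVk=0} together with Remark~\ref{idmdl}, is correct and is exactly the bookkeeping the paper glosses over in writing the displayed identity.

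One point deserves sharpening. You justify self-adjointness of $A_V$ on $(V,\|\cdot\|_{L^2})$ as ``a consequence of (i) since $A_V$ is then realised as the classical symmetric differential operator $-\Delta$''. That gives symmetry only, not self-adjointness: knowing the action is $-\Delta$ says nothing about whether the domain is maximal. The clean argument---and the one the paper tacitly relies on---is pure form theory: $A$ is the positive self-adjoint operator on $H$ associated with the symmetric coercive form $a$, whose form domain is $V$ and whose form norm $a(f,f)^{1/2}=\|f\|_{L^2}$ \emph{is} the $L^2$-norm on $V$. Hence $A^{1/2}:(V,\|\cdot\|_{L^2})\to(H,\|\cdot\|_H)$ is unitary, and conjugation by it carries $A$ to its part $A_V$; in particular $A_V$ is self-adjoint on $V$. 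With this correction your block-diagonal argument for $A_K$ goes through, and the rest of the proposal is sound.
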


Furthermore, because $\Omega=T^N$ is a bounded Lipschitz domain (hence a \emph{quasi-convex domain} in the sense of~\cite[\S~5.1]{AshGesMit13}) one can apply the general theory surveyed in~\cite{AshGesMit13} and in particular by~\cite[Thm~6.5]{AshGesMit13} $-A_K$ is precisely the Laplacian 
 with boundary conditions given by
\[
\frac{\partial u}{\partial \nu}={\rm D\!\! N} u_{|\partial \Omega}\qquad \hbox{on }\partial \Omega,
\]
where ${\rm D\!\! N}$ is the Dirichlet-to-Neumann operator associated with $\Delta$ on $\partial \Omega$,  defined in a suitably weak sense, see~\cite[\S~5--6]{AshGesMit13}.

\begin{rem}
In case of $N=1$ we recover in particular the boundary conditions 
\[
u'(1)=u'(0)=u(1)-u(0):
\] 
This in accordance with~\cite[Cor.~3.10]{MugNic13} and also with the setting of~\cite{BobMug13}. The same boundary conditions are also referred to as \emph{transparent conditions} in numerical analysis and \emph{$\delta'$-interaction} in mathematical physics, cf.\ respectively~\cite{HalRau95,Exn96} for an interpretation of these conditions and a list of related references. While the Krein--von Neumann extension is already known to have interesting connections with elasticity theory, cf.~\cite{AshGesMit13}, the observation that the associated heat equation enjoys conservation of moments of order 0 and 1 seems to be new in the literature.
\end{rem}

\bibliographystyle{abbrv}
\bibliography{../../referenzen/literatur}

\end{document}